\newtheorem{thm}{Theorem}
\newtheorem{cor}[thm]{Corollary}
\newtheorem{lem}[thm]{Lemma}
\newtheorem{prop}[thm]{Proposition}
\theoremstyle{definition}
\theoremstyle{remark}
\newtheorem{rem}[thm]{Remark}
\numberwithin{equation}{section}
\newcommand{\PP}{\mathcal{P}}
\newcommand{\QQ}{\mathcal{Q}}
\newcommand{\Ass}{\textnormal{Asso}}
\newcommand{\Sub}{\textnormal{Sub}}
\newcommand{\Quo}{\textnormal{Quo}}
\newcommand{\rk}{\textnormal{CB-rk}}
\begin{document}

\address{IRMAR, Campus de Beaulieu, 35042 Rennes CEDEX, France}
\email{yves.decornulier@univ-rennes1.fr}
\subjclass[2000]{Primary 13C05; Secondary 13E05}

\title{The space of finitely generated rings}
\author{Yves Cornulier}%
\date{\today}


\begin{abstract}The space of marked commutative rings on $n$ given generators is a compact metrizable space. We compute the Cantor-Bendixson rank of any member of this space. For instance, the Cantor-Bendixson rank of the free commutative ring on $n$ generators is $\omega^n$, where $\omega$ is the smallest infinite ordinal. More generally, we work in the space of finitely generated modules over a given commutative ring.
\end{abstract}
\maketitle
\section{Introduction}

All rings in the paper are commutative with unity. The space $\mathcal{R}_n$ of finitely generated rings marked on $n$ generators is by definition the set of pairs $(A,(x_1,\dots,x_n))$ where $A$ is a ring endowed with a family of ring generators $(x_1,\dots,x_n)$, up to marked ring isomorphism. This is a topological space, where a prebasis of neighbourhoods of the marked ring $(A,(x_1,\dots,x_n))$ is given by the sets $V_P=V_P(A,(x_1,\dots,x_n))$, for $P\in\mathbf{Z}[x_1,\dots,x_n]$, defined as follows: if $P=0$ (respectively $P\neq 0$) in $A$, $V_P$ is the set of marked rings on $n$ generators in which $P=0$ (resp. $P\neq 0$), where $P$ is evaluated on the $n$-tuple of marked generators. The space $\mathcal{R}_n$ is a compact metrizable, totally disconnected topological space, which is moreover countable by noetherianity. Hence every element of this space has a well-defined Cantor-Bendixson rank. Informally, this is the necessary (ordinal) number of times we have to remove isolated points so that the point itself becomes isolated. It is easy to check that the Cantor-Bendixson rank of a given marked ring does not depend on the choice of generators, and only on the isomorphism class within rings. See the beginning of Section \ref{sec:cb} for more details.

To every noetherian ring, we can associate an ordinal-valued length, characterized by the formula
$$\ell(A)=\sup\{\ell(B)+1|B\text{ proper quotient of }A\}\quad\text{(Agreeing }\ell(\{0\})=0).$$
This length was introduced in \cite{Bass,Gull} and further studied in \cite{Krau,Bro1,Bro2}. It can be computed in a more explicit way (see Section \ref{lengthbis}). In particular, if the Krull dimension of $A$ is an ordinal $\alpha$, then $\omega^\alpha\le\ell(A)<\omega^{\alpha+1}$, where the left-hand inequality is an equality if and only if $A$ is a domain.

This length provides an obvious upper bound for the Cantor-Bendixson rank, but this is not optimal as every finite nonzero ring is isolated and therefore has zero Cantor-Bendixson rank, while it has finite but nonzero length. Accordingly we introduce the reduced length, an ordinal-valued function characterized by the formula
$$\ell'(A)=\sup\{\ell'(B)+1|B\text{ quotient of }A\text{ with non-artinian kernel}\}.$$
In the context of finitely generated rings, non-artinian just means infinite. A precise formula is given in Section \ref{reducedo}. In particular, if the Krull dimension of $A$ is a finite number $d$, then $\omega^{d-1}\le\ell'(A)<\omega^{d}$, where the left-hand inequality is an equality if $A$ is a domain, and agreeing $\omega^{-1}=0$.

\begin{thm}
Let $A$ be a finitely generated ring. Then its Cantor-Bendixson rank
coincides with its reduced length.\label{main}
\end{thm}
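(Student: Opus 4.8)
I would prove the two inequalities $\rk(A)\le\ell'(A)$ and $\rk(A)\ge\ell'(A)$ separately, each by transfinite induction on $\ell'(A)$. Write $\mathcal{R}_n^{(\beta)}$ for the $\beta$-th Cantor--Bendixson derivative, so that $\rk(A)$ is the unique $\beta$ with $A\in\mathcal{R}_n^{(\beta)}\setminus\mathcal{R}_n^{(\beta+1)}$. Three facts set the stage. Since $A$ is noetherian, a neighbourhood basis of $A$ is given by the clopen sets $W(q_1,\dots,q_k)=\{A/J:\ q_i\notin J\ \text{for all }i\}$, with $q_1,\dots,q_k$ ranging over finite families of nonzero elements of $A$ and $J$ over ideals; hence every marked ring near $A$ is a quotient $A/J$, and $A/J\to A$ means $J\to 0$ (every fixed nonzero element of $A$ eventually leaves $J$). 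Next, for a finitely generated ring an ideal is non-artinian iff it is infinite, and $\ell'$ is monotone under quotients ($B$ a quotient of $C\Rightarrow\ell'(B)\le\ell'(C)$, immediate from the recursive formula). Finally, $A$ has a largest finite ideal $F$, which being a finite module has only finitely many submodules.

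\textbf{Upper bound.} Assume $\rk(C)\le\ell'(C)$ whenever $\ell'(C)<\alpha:=\ell'(A)$, and suppose $A\in\mathcal{R}_n^{(\alpha+1)}$. Then $A$ is a limit of a sequence of distinct points $B_m=A/J_m$ of $\mathcal{R}_n^{(\alpha)}$, so $J_m\ne0$, $J_m\to0$ and $\rk(B_m)\ge\alpha$. If infinitely many $J_m$ are infinite, then for those $m$, $\ell'(B_m)+1\le\ell'(A)=\alpha$, so by induction $\rk(B_m)\le\ell'(B_m)<\alpha$ -- absurd. Otherwise infinitely many $J_m$ are finite, hence contained in $F$; as $F$ has only finitely many submodules, a fixed nonzero ideal equals $J_m$ for infinitely many $m$, so a fixed nonzero element of $A$ lies in infinitely many $J_m$, contradicting $J_m\to0$. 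Hence $\rk(A)\le\ell'(A)$.

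\textbf{Lower bound.} Assume $\rk(C)\ge\ell'(C)$ whenever $\ell'(C)<\alpha:=\ell'(A)$. Everything reduces to the \emph{avoidance claim}: for every finite family $q_1,\dots,q_k$ of nonzero elements of $A$ and every $\gamma<\alpha$, there is an infinite ideal $J$ with $q_i\notin J$ for all $i$ and $\ell'(A/J)\ge\gamma$. Granting it, $B:=A/J$ lies in $W(q_1,\dots,q_k)$, is distinct from $A$, and has $\ell'(B)<\ell'(A)$ since $J$ is infinite; so $\rk(B)\ge\ell'(B)\ge\gamma$ by induction. Running the family through a countable neighbourhood basis of $A$ gives a sequence of rank-$\ge\gamma$ points converging to $A$, so $A\in\mathcal{R}_n^{(\gamma+1)}$; as $\gamma<\alpha$ is arbitrary and the derived sets are decreasing (with $\mathcal{R}_n^{(\alpha)}=\bigcap_{\gamma<\alpha}\mathcal{R}_n^{(\gamma)}$ for limit $\alpha$) this yields $\rk(A)\ge\alpha$.

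\textbf{The avoidance claim -- the main obstacle.} This is where the work lies. Using the recursive formula, first pick an infinite ideal $I$ with $\ell'(A/I)\ge\gamma$; it then suffices to find an infinite ideal $J\subseteq I$ with $q_i\notin J$ whenever $q_i\in I$ (the other $q_i$ automatically avoid $J\subseteq I$), since then $A/J$ surjects onto $A/I$, whence $\ell'(A/J)\ge\gamma$ by monotonicity. The delicate point is keeping such a $J$ infinite. I would do this by passing to an associated prime: the nonzero finitely generated module $I$ is non-artinian, so it has an associated prime $\mathfrak{p}$ with $\dim A/\mathfrak{p}\ge1$ together with a submodule $Ax\cong A/\mathfrak{p}$, $\Ann(x)=\mathfrak{p}$. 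In the finitely generated domain $D=A/\mathfrak{p}$ choose a maximal ideal $\mathfrak{m}$; by Krull's intersection theorem $\bigcap_{\nu}\mathfrak{m}^{\nu}=0$ in $D$, so the nonzero images in $D$ of the relevant $q_i$ all leave $\mathfrak{m}^{N}$ for $N$ large. With $\mathfrak{n}\subseteq A$ the preimage of $\mathfrak{m}$, the ideal $J:=\mathfrak{n}^{N}x\subseteq Ax\subseteq I$ then does the job: it is infinite because $J\cong\mathfrak{m}^{N}$ and $D/\mathfrak{m}^{N}$ is finite while $D$ is infinite, and no $q_i$ lies in $J$ by the choice of $N$. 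This proves the claim, and with it the theorem. The main difficulty, as flagged, is precisely this construction -- producing, inside any neighbourhood of $A$, a quotient of prescribed large reduced length; the length bookkeeping (successor versus limit stages, and the explicit value of $\ell'$) is routine given the earlier sections.
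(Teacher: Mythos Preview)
Your argument is correct. The upper bound is essentially identical to the paper's (the paper phrases it for modules in Theorem~\ref{cb}, with the finite maximal artinian submodule playing the role of your $F$). The lower bound, however, takes a different route. The paper argues by contradiction and inducts on \emph{proper quotients} (equivalently on $\ell$, not $\ell'$): it picks one infinite $W$ with $\ell'(M/W)\ge\rk(M)$, then invokes Lemma~\ref{rkzero} to extract a strictly decreasing chain $(W_n)$ inside $W$ with $\bigcap W_n=0$; convergence $M/W_n\to M$ is then automatic, and induction applies to every $M/W_n$ because $W_n\neq 0$ (no need for $W_n$ to be infinite). Your avoidance claim instead rebuilds this convergence by hand for each basic neighbourhood, and your associated-prime/Krull-intersection construction of $J=\mathfrak{n}^N x$ is essentially the content of Lemma~\ref{rkzero} carried out inline, with the extra bookkeeping of keeping $J$ infinite forced by your choice to induct on $\ell'$ rather than $\ell$. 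One small wording issue: ``the nonzero images in $D$ of the relevant $q_i$'' is imprecise---the $q_i$ do not themselves map to $D$; what you mean is that for those $q_i\in Ax$, write $q_i=a_ix$ and use that the image of $a_i$ in $D$ is nonzero, hence eventually leaves $\mathfrak{m}^N$. With that clarified, the argument goes through. The paper's packaging is cleaner and immediately generalises to modules; yours is more self-contained for the ring case and makes the neighbourhood structure explicit.
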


\begin{cor}
If $A$ is a finitely generated domain of Krull dimension $d$, then its Cantor-Bendixson rank is $\omega^{d-1}$.\label{cor:cbrkdom} 
\end{cor}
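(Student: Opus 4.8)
The plan is simply to combine Theorem~\ref{main} with the value of the reduced length on finitely generated domains. By Theorem~\ref{main}, the Cantor--Bendixson rank of $A$ equals $\ell'(A)$, so the corollary amounts to the identity $\ell'(A)=\omega^{d-1}$ for a finitely generated domain $A$ of Krull dimension $d$.

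That identity is exactly what is recorded in the introduction and will be proved in Section~\ref{reducedo}: for any finitely generated ring of Krull dimension $d$ one has $\omega^{d-1}\le\ell'(A)<\omega^{d}$, with the left-hand inequality an equality when $A$ is a domain. Hence, granting those facts, there is nothing more to do. The extreme case $d=0$ is consistent with the convention $\omega^{-1}=0$: a finitely generated domain of Krull dimension $0$ is a field, and a field that is finitely generated as a ring is finite, hence isolated in $\mathcal{R}_n$ and of Cantor--Bendixson rank $0$.

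For orientation, the two inequalities making up $\ell'(A)=\omega^{d-1}$ split the difficulty unevenly. The upper bound $\ell'(A)\le\omega^{d-1}$ is easy and uses that $A$ is a domain: every quotient $B$ of $A$ with infinite kernel satisfies $\dim B\le d-1$, hence $\ell'(B)<\omega^{d-1}$ by the general dimension estimate, and since $\omega^{d-1}$ is a limit ordinal (the cases $d\le 1$ being checked directly) the supremum defining $\ell'(A)$ is still $\le\omega^{d-1}$. The lower bound $\ell'(A)\ge\omega^{d-1}$ is the real obstacle: one must exhibit, for each $\beta<\omega^{d-1}$, a quotient $B$ of $A$ with $\ell'(B)>\beta$, and, $\ell'$ being monotone under passage to quotients, it is enough to arrange this among carefully chosen quotients of dimension $d-1$. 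This is where Noether normalization and an induction on the dimension enter, and it is the step I expect to require genuine work; once it is available, the corollary follows by the one-line reduction above.
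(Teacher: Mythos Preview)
Your reduction is correct and matches the paper: the corollary is immediate from Theorem~\ref{main} together with the value $\ell'(A)=\omega^{d-1}$ for a finitely generated domain of dimension $d$, which is stated in the introduction as a consequence of the explicit formula in Section~\ref{reducedo}.

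Your orientation paragraph, however, misjudges where the work lies and describes a route the paper does not take. The paper does not compute $\ell'(A)$ for a domain via the inductive sup-characterization of Proposition~\ref{sigmaprime}, and Noether normalization plays no role. Instead one reads it straight off the closed formula $\ell'(M)=\sum_\alpha\omega^\alpha\cdot\ell_{\alpha'}(M)$: a domain $A$ of Krull dimension $d$ has $(0)$ as its unique associated prime, of coheight $d$, and $\ell_{(0)}(A)=\ell_{A_{(0)}}(A_{(0)})=1$ since $A_{(0)}$ is the fraction field. Hence $\ell_\beta(A)=0$ for $\beta\neq d$ and $\ell_d(A)=1$, so $\ell(A)=\omega^d$, and then $\ell'(A)=\omega^{d-1}$ from the relation $\ell(A)=\omega\cdot\ell'(A)+\ell_0(A)$ (with the $d=0$ case handled by the convention $\omega^{-1}=0$). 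There is no ``real obstacle'' in the lower bound; both inequalities are one line. Your sketch via dimension drop of quotients and induction would also work, but it is considerably heavier than what the paper actually does.
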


Actually, every ring can be viewed as a module over itself generated by one
element, and ideals and submodules coincide; therefore it is a more general point of view if we consider the space of modules generated by $k$ marked elements over $\mathbf{Z}[x_1,\dots,x_n]$ (or even any commutative ring), the case $k=1$ corresponding to the space of rings marked by $n$ elements. Actually the point of view and language of modules (extensions, etc.) are very natural and useful in this context and it would have been awkward to restrict to rings. The definition of length and reduced length is extended to modules in the next sections, and in Section \ref{sec:cb} we will obtain Theorem \ref{main} as a particular case of Theorem \ref{cb}, which holds in the general context of modules over finitely generated rings. Comments on the case of modules over some infinitely generated rings are included in Section \ref{sec:other}.

\begin{rem}
It also makes sense to talk about the set of {\it integral domains} generated by $n$ given elements; this can be viewed as a closed subset of $\mathcal{R}_n$ and corresponds bijectively to $\text{Spec}(\mathbf{Z}[X_1,\dots,X_n])$. It is proved in \cite{CDN} that, {\it inside this space}, the Cantor-Bendixson rank of any $d$-dimensional domain is $d$ and in particular is finite (this contrasts with Corollary \ref{cor:cbrkdom}).
\end{rem}


\section{Length}\label{lengthbis}

Let $A$ be a ring (always assumed commutative). If $M$ is an $A$-module, define $\Lambda(M)$ as the set of elements $m\in M$ such that $Am$ has finite length. This is a submodule of $M$; if moreover $M$ is noetherian, then $\Lambda(M)$ itself has finite length.

Let $\PP$ be a prime ideal in $A$ and $M$ a noetherian $A$-module. As usual, $A_\PP$ denotes the local ring of $A$ at $\PP$ ($A_\PP=S^{-1}A$, where $S=A-\PP$), and $\ell_{A_\PP}$ denotes the length function for finite length $A_\PP$-modules.

Define 
$$\ell_\PP(M)=\ell_{A_\PP}(\Lambda(M\otimes_AA_\PP)).$$

Recall that $\Ass_A(M)$ is defined as the set of prime ideals $\PP$ of $A$ such that $A/\PP$ embeds as a submodule of $M$. It is known to be finite if $M$ is noetherian, and non-empty if moreover $M\neq 0$.

\begin{lem}
We have $\ell_\PP(M)>0$ if and only if $\PP\in\Ass_A M$.\label{lengthass}
\end{lem}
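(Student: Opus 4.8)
The plan is to unwind the definitions on both sides and reduce everything to the local ring $A_\PP$. First I would recall that $\ell_\PP(M) = \ell_{A_\PP}(\Lambda(M \otimes_A A_\PP)) = \ell_{A_\PP}(\Lambda(M_\PP))$, so $\ell_\PP(M) > 0$ exactly when $\Lambda(M_\PP) \neq 0$, i.e.\ when there is a nonzero element $m \in M_\PP$ with $A_\PP m$ of finite length over $A_\PP$. I would then observe that a nonzero cyclic module $A_\PP m \cong A_\PP/\Ann(m)$ has finite length over the local ring $A_\PP$ if and only if $\Ann(m)$ is $\PP A_\PP$-primary, equivalently the only prime containing $\Ann(m)$ is the maximal ideal $\PP A_\PP$; one direction of this is the standard fact that a noetherian local ring has finite length iff it is artinian iff its maximal ideal is its only prime, and we get noetherianity for free since $M$ is noetherian and localization preserves this.

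Next I would bring in the characterization of associated primes via localization: for a noetherian module, $\PP \in \Ass_A(M)$ if and only if $\PP A_\PP \in \Ass_{A_\PP}(M_\PP)$, and since $\PP A_\PP$ is the maximal ideal of $A_\PP$, this happens if and only if $\Ass_{A_\PP}(M_\PP)$ contains the maximal ideal, i.e.\ if and only if there is an element of $M_\PP$ whose annihilator is exactly $\PP A_\PP$. So the task becomes: show that $\Lambda(M_\PP) \neq 0$ if and only if some element of $M_\PP$ has annihilator equal to the maximal ideal of $A_\PP$. The ``if'' direction is immediate, since $A_\PP/\PP A_\PP$ is a field and hence has length one. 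For the ``only if'' direction, given a nonzero $m$ with $A_\PP m$ of finite length, I would pick a nonzero element in the socle of $A_\PP m$ (the bottom of a composition series, or equivalently $\Ass$ of a nonzero finite-length module over a local ring is forced to be $\{\text{maximal ideal}\}$); that element has annihilator the maximal ideal, giving the desired associated prime.

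An alternative, perhaps cleaner, route that I would also consider: use the general fact $\Ass_{A_\PP}(M_\PP) = \{\QQ A_\PP : \QQ \in \Ass_A(M),\ \QQ \subseteq \PP\}$, together with the observation that $\Ass_{A_\PP}(\Lambda(M_\PP))$ consists of exactly those associated primes of $M_\PP$ that are maximal, which over the local ring $A_\PP$ means equal to $\PP A_\PP$. Then $\Lambda(M_\PP) \neq 0 \iff \Ass_{A_\PP}(\Lambda(M_\PP)) \neq \emptyset \iff \PP A_\PP \in \Ass_{A_\PP}(M_\PP) \iff \PP \in \Ass_A(M)$, using that a noetherian module is nonzero iff its set of associated primes is nonempty. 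The only genuinely substantive point — and the step I expect to need the most care — is the claim that $\Ass_{A_\PP}(\Lambda(N)) = \Ass_{A_\PP}(N) \cap \{\text{maximal ideal}\}$ for a noetherian module $N$ over a local ring; this rests on the fact that $\Lambda(N)$ is the largest submodule of finite length, hence the largest submodule all of whose associated primes are maximal, which one checks using that a submodule generated by finitely many elements each with $\PP A_\PP$-primary annihilator again has finite length. Everything else is a routine invocation of standard commutative algebra (finiteness of $\Ass$ for noetherian modules, behavior of $\Ass$ under localization, and artinian $=$ finite length for noetherian local rings).
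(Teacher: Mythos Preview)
Your proposal is correct, and your second (``alternative'') route is essentially the paper's own argument: the paper shows that any associated prime of $\Lambda(M\otimes A_\PP)$ must equal $\PP A_\PP$ (since $A_\PP/\QQ A_\PP$ embedding into a finite-length module forces $\QQ=\PP$), then invokes exactly the localization formula $\Ass_{A_\PP}(M\otimes A_\PP)=\{\QQ A_\PP:\QQ\in\Ass_A(M),\ \QQ\subset\PP\}$ that you cite. Your first route, via the socle of a finite-length cyclic submodule, is just a slightly more concrete rephrasing of the same idea (an element of the socle is precisely an element with annihilator $\PP A_\PP$, i.e.\ a witness that $\PP A_\PP$ is associated), so there is no substantive difference in strategy.
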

\begin{proof}
If $\PP\in\Ass_A M$, then $A/\PP$ embeds into $M$; by flatness of $A_\PP$, this implies that $A_\PP/\PP A_\PP$ embeds into $M\otimes A_\PP$; so that $\Lambda(M\otimes A_\PP)$ is non-zero and therefore has non-zero length.

Conversely suppose that $\Lambda(M\otimes A_\PP)$ is non-zero. Then there exists an associated ideal $\QQ'\in\Ass_{A_\PP}(\Lambda(M\otimes A_\PP))$. We can write $\QQ'$ as $\QQ A_\PP$, for some prime ideal $\QQ$ of $A$ contained in $\PP$. Now $A_\PP/\QQ A_\PP$ embeds into $\Lambda(M\otimes A_\PP)$ and therefore has finite length; this forces $\QQ=\PP$. In other words, we have just proved that $$\Ass_{A_\PP}(\Lambda(M\otimes A_\PP))=\{\PP A_\PP\}.$$ On the other hand, $$\Ass_{A_\PP}(\Lambda(M\otimes A_\PP))\subset \Ass_{A_\PP}(M\otimes A_\PP)$$ $$=\{\QQ A_\PP|\QQ\in\Ass_A(M),\QQ\subset\PP\}.$$ Therefore $\PP\in\Ass_A(M)$. 
\end{proof}

\begin{lem}
Consider a short exact sequence $0\to K\to M\to N\to 0$ of noetherian $A$-modules. Let $\PP$ be a prime ideal in $A$. Suppose that the $A_\PP$-module $K\otimes A_\PP$ has finite length. Then $\ell_\PP(M)=\ell_\PP(N)+\ell_\PP(K)$.\label{lengthext}
\end{lem}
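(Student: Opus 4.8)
The plan is to reduce to the case where $A$ is local, specifically to work entirely over $A_\PP$ after tensoring. First I would tensor the short exact sequence $0\to K\to M\to N\to 0$ with $A_\PP$; since $A_\PP$ is flat over $A$, we obtain an exact sequence $0\to K\otimes A_\PP\to M\otimes A_\PP\to N\otimes A_\PP\to 0$ of noetherian $A_\PP$-modules. So it suffices to prove: given a short exact sequence $0\to K'\to M'\to N'\to 0$ of noetherian modules over a noetherian local ring $(R,\mathfrak{m})$ with $K'$ of finite length, one has $\ell_R(\Lambda(M'))=\ell_R(\Lambda(N'))+\ell_R(K')$, where $\Lambda$ is the maximal finite-length submodule. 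Note $K'=\Lambda(K')$ since $K'$ itself has finite length.

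The key step is to understand how $\Lambda$ behaves in this exact sequence. I would argue that $\Lambda(K')\to\Lambda(M')\to\Lambda(N')$ is exact and, crucially, that $\Lambda(M')\to\Lambda(N')$ is surjective. The inclusion $\Lambda(M')\cap K'=\Lambda(K')=K'$ is immediate (any submodule of $M'$ contained in $K'$ has finite length since $K'$ does), giving left-exactness. For surjectivity: take $\bar n\in\Lambda(N')$, so $R\bar n$ has finite length; lift $\bar n$ to $m\in M'$. Then $Rm$ maps onto $R\bar n$ with kernel $Rm\cap K'$, which has finite length (submodule of $K'$); hence $Rm$ has finite length, so $m\in\Lambda(M')$ and maps to $\bar n$. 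Therefore
$$0\to K'\to \Lambda(M')\to\Lambda(N')\to 0$$
is exact, and additivity of length on short exact sequences of finite-length modules gives $\ell_R(\Lambda(M'))=\ell_R(K')+\ell_R(\Lambda(N'))$, which is the claim.

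The main obstacle, such as it is, is the surjectivity of $\Lambda(M')\to\Lambda(N')$; the rest is bookkeeping about flatness and additivity of length. The finite-length hypothesis on $K\otimes A_\PP$ is exactly what makes this work — without it, $Rm\cap K'$ need not have finite length and an element of $\Lambda(N')$ need not lift into $\Lambda(M')$, so the formula would fail (one would only get subadditivity-type statements). I would also remark that $\Lambda(M'\otimes A_\PP)$ is the correct object and that its finite length, when $M'$ is noetherian, was already observed in the paragraph preceding Lemma \ref{lengthass}, so all the lengths appearing are genuine natural numbers and the additivity is unambiguous.
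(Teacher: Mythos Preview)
Your proof is correct and follows exactly the same approach as the paper: tensor by the flat module $A_\PP$, then show that the resulting short exact sequence restricts to a short exact sequence on the $\Lambda$-parts, and conclude by additivity of length. The paper's proof is a two-line sketch that simply asserts the induced sequence on $\Lambda$-parts is exact; you have supplied precisely the detail it omits, namely the surjectivity of $\Lambda(M')\to\Lambda(N')$ via lifting and using that the kernel intersected with $K'$ has finite length.
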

\begin{proof}
By flatness of $A_\PP$, we get an exact sequence
$$0\to K\otimes A_\PP\to M\otimes A_\PP\to N\otimes A_\PP\to 0,$$
which by the assumption on $K$ induces an exact sequence
$$0\to K\otimes A_\PP\to \Lambda(M\otimes A_\PP)\to \Lambda(N\otimes A_\PP)\to 0.$$ 
\end{proof}

\begin{lem}
Let $\PP$ be a prime ideal in a noetherian ring $A$. Equivalences:
\begin{itemize}
\item[(i)] The sequence $(\ell_\PP(A/\PP^n))$ is bounded; 
\item[(ii)] $\PP$ is a minimal prime ideal.
\end{itemize}\label{lengthinfinity}
\end{lem}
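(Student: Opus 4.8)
The plan is to analyze $\ell_\PP(A/\PP^n) = \ell_{A_\PP}(\Lambda((A/\PP^n)\otimes A_\PP))$ by first reducing to the local ring $A_\PP$. Set $R = A_\PP$ with maximal ideal $\mathfrak{m} = \PP A_\PP$. Since localization is exact and commutes with quotients, $(A/\PP^n)\otimes A_\PP \cong R/\mathfrak{m}^n$, and this is a module of finite length over $R$ (its only associated prime is $\mathfrak{m}$, or one sees directly that $\mathfrak{m}^k/\mathfrak{m}^{k+1}$ is a finite-dimensional vector space over the field $R/\mathfrak{m}$ by noetherianity). Being already of finite length, $\Lambda(R/\mathfrak{m}^n) = R/\mathfrak{m}^n$, so the claim becomes: the sequence $\ell_R(R/\mathfrak{m}^n)$ is bounded if and only if $\PP$ is a minimal prime of $A$, equivalently $\dim R = 0$, equivalently $\mathfrak{m}$ is nilpotent in $R$.

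The two implications then go as follows. If $\PP$ is minimal, then $R$ is an artinian local ring, so $\mathfrak{m}^N = 0$ for some $N$, hence $R/\mathfrak{m}^n = R/\mathfrak{m}^N$ for all $n \geq N$ and the sequence is eventually constant, in particular bounded. Conversely, suppose $\PP$ is not minimal. Then $\dim R \geq 1$, so $\mathfrak{m}$ is not nilpotent, meaning all the powers $\mathfrak{m}^n$ are distinct (Krull intersection, or simply: if $\mathfrak{m}^n = \mathfrak{m}^{n+1}$ then by Nakayama $\mathfrak{m}^n = 0$). Thus each quotient $\mathfrak{m}^n/\mathfrak{m}^{n+1}$ is nonzero, giving $\ell_R(R/\mathfrak{m}^{n+1}) = \ell_R(R/\mathfrak{m}^n) + \ell_R(\mathfrak{m}^n/\mathfrak{m}^{n+1}) \geq \ell_R(R/\mathfrak{m}^n) + 1$, so $\ell_R(R/\mathfrak{m}^n) \geq n$ is unbounded.

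The main obstacle — really the only point requiring care — is the equivalence ``$\PP$ minimal $\iff$ $\mathfrak{m} = \PP A_\PP$ nilpotent in $A_\PP$'' and the well-definedness of the lengths involved. For the former: $\PP$ is minimal over $(0)$ in $A$ iff $\PP A_\PP$ is the unique prime of $A_\PP$ iff $\PP A_\PP$ equals the nilradical of $A_\PP$; since $A_\PP$ is noetherian its nilradical is nilpotent, so this forces $(\PP A_\PP)^N = 0$. For the latter, one must check $R/\mathfrak{m}^n$ genuinely has finite length over $R$: this is standard noetherian local algebra ($R/\mathfrak{m}$ is a field and each $\mathfrak{m}^k/\mathfrak{m}^{k+1}$ is a finitely generated module over it). Once these routine facts are in place the lemma follows immediately from the additivity of length on the short exact sequences $0 \to \mathfrak{m}^n/\mathfrak{m}^{n+1} \to R/\mathfrak{m}^{n+1} \to R/\mathfrak{m}^n \to 0$.
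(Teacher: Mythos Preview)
Your proof is correct and follows essentially the same approach as the paper: both reduce to studying $\ell_{A_\PP}(A_\PP/(\PP A_\PP)^n)$ in the local ring $A_\PP$, use nilpotence of the maximal ideal when $\PP$ is minimal to get boundedness, and invoke Nakayama's lemma (via $\mathfrak{m}^n=\mathfrak{m}^{n+1}\Rightarrow\mathfrak{m}^n=0$) for the converse. The only cosmetic difference is that the paper argues the converse directly (bounded $\Rightarrow$ stationary $\Rightarrow$ Nakayama) while you argue by contrapositive, but the content is identical.
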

\begin{proof}
$\ell_\PP(A/\PP^n)=\ell_{A_\PP}(\Lambda(A_\PP/\PP^n A_\PP))=\ell_{A_\PP}(A_\PP/\PP^n A_\PP)$.

If $\PP$ is minimal, then $\PP A_\PP$ is the radical of the artinian ring $A_\PP$, so that $\PP^n A_\PP$ is eventually zero and therefore the sequence $A_\PP/\PP^n A_\PP$ eventually stabilizes.

Conversely, if the sequence above is bounded, then it is stationary, so that in the local ring $A_\PP$, $(\PP A_\PP)^n=(\PP A_\PP)^{n+1}$ for some $n$. By Nakayama's Lemma, this forces $(\PP A_\PP)^n=0$, so that $A_\PP$ is actually artinian, i.e. $\PP$ is a minimal prime ideal.
\end{proof}

Let now $A$ be a ring. Say that a prime ideal $\PP$ in $A$ is conoetherian if $A/\PP$ is noetherian. Define, for every conoetherian prime ideal $\PP$, its coheight as the ordinal
$$\text{coht}(\PP)=\sup\{\text{coht}(\QQ)+1|\QQ\text{ prime ideal properly containing }\PP\}.$$
The noetherianity assumption makes this definition valid.

If $M$ is a noetherian $A$-module, then every $\PP\in\Ass_A(M)$ is conoetherian. Define, for every ordinal $\alpha$,
$$\ell_\alpha(M)=\sum\ell_\PP(M),$$
where $\PP$ ranges over conoetherian prime ideals of coheight $\alpha$ in $A$.
This is a finite sum as $\ell_\PP(M)\neq 0$ only when $\PP\in\Ass_A(M)$. Besides, the (ordinal-valued) Krull dimension of $M$ is defined as $\sup\text{coht}(\PP)$, where $\PP$ ranges over prime ideals of $A$ containing the annihilator of $M$ (or, equivalently, over all associated primes of $M$).

\begin{lem}
Consider a short exact sequence $0\to K\to M\to N\to 0$ of noetherian $A$-modules. Let $\alpha$ be an ordinal. Suppose that $K$ has Krull dimension $\le\alpha$. Then $\ell_\alpha(M)=\ell_\alpha(N)+\ell_\alpha(K)$.\label{lengthexto}
\end{lem}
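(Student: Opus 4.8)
The plan is to reduce the additivity of $\ell_\alpha$ to the additivity of $\ell_\PP$ already established in Lemma \ref{lengthext}, by checking that the hypothesis of that lemma is met for every prime $\PP$ of coheight $\alpha$. So first I would fix a conoetherian prime $\PP$ with $\text{coht}(\PP)=\alpha$ and argue that $K\otimes A_\PP$ has finite length as an $A_\PP$-module. Indeed, the support of $K$ is contained in $\{\QQ : \QQ\supset\Ann(K)\}$, and by hypothesis every such $\QQ$ has $\text{coht}(\QQ)\le\alpha$; since $\text{coht}(\PP)=\alpha$, no prime strictly containing $\PP$ can contain $\Ann(K)$, so $\PP$ is either not in the support of $K$ (in which case $K\otimes A_\PP=0$) or is a minimal element of $\text{Supp}(K)$. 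In the latter case $\PP A_\PP$ is a minimal prime over $\Ann(K)A_\PP = \Ann_{A_\PP}(K\otimes A_\PP)$, so $K\otimes A_\PP$ is a finitely generated module over the Noetherian local ring $A_\PP$ whose support is $\{\PP A_\PP\}$, hence of finite length. Either way the hypothesis of Lemma \ref{lengthext} holds, giving $\ell_\PP(M)=\ell_\PP(N)+\ell_\PP(K)$ for each such $\PP$.

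Next I would sum these equalities over all conoetherian primes $\PP$ of coheight exactly $\alpha$. The sum is finite on each side because $\ell_\PP$ vanishes off the (finite) associated-prime sets, and $\Ass(K)\cup\Ass(N)\supseteq\Ass(M)$ with all three finite. The only subtlety in manipulating the sum is that it is a sum of ordinals, which is not commutative in general; but here each $\ell_\PP(\cdot)$ is a natural number (finite length of an $A_\PP$-module), so all the sums involved are finite sums of natural numbers and ordinary commutativity and associativity apply. Thus $\ell_\alpha(M)=\sum_\PP\ell_\PP(M)=\sum_\PP\big(\ell_\PP(N)+\ell_\PP(K)\big)=\sum_\PP\ell_\PP(N)+\sum_\PP\ell_\PP(K)=\ell_\alpha(N)+\ell_\alpha(K)$, where all sums range over conoetherian primes of coheight $\alpha$.

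The main point to get right is the finite-length claim for $K\otimes A_\PP$, i.e. the translation of "$K$ has Krull dimension $\le\alpha$" into "$\PP$ is not properly contained in any prime of $\text{Supp}(K)$, hence is minimal there"; once that is clean, everything else is bookkeeping. One should also note in passing that each $\PP$ appearing with $\ell_\PP(K)\neq 0$ lies in $\Ass(K)$ and is therefore conoetherian, so "coheight $\alpha$" is well-defined for it, and similarly for $N$ and $M$; this is exactly the remark made before the statement that $\ell_\alpha$ is a finite sum. I do not expect any genuine obstacle here — this lemma is the routine "additivity in each coheight stratum" companion to Lemma \ref{lengthext}.
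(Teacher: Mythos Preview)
Your approach is correct and essentially the same as the paper's: both reduce to Lemma~\ref{lengthext} by showing that $K\otimes A_\PP$ has finite length for each prime $\PP$ of coheight $\alpha$, using that no prime in $\mathrm{Supp}(K)$ can be strictly contained in $\PP$. Two small points: you wrote ``no prime strictly containing $\PP$'' where you meant ``strictly contained in $\PP$'' (the subsequent sentence about $\PP A_\PP$ being minimal over $\Ann(K)A_\PP$ shows you had the right direction in mind); and $A_\PP$ need not be Noetherian here since $A$ is an arbitrary ring, but the argument is fine because $K\otimes A_\PP$ is a module over the localization of the Noetherian ring $A/\Ann(K)$, which is artinian when $\PP$ is minimal in $\mathrm{Supp}(K)$. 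The paper verifies the same finite-length claim via a prime filtration of $K$ rather than the support argument, but this is a cosmetic difference.
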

\begin{proof}
It suffices to prove that for every prime ideal $\PP$ of coheight $\alpha$, we have $\ell_\PP(M)=\ell_\PP(N)+\ell_\PP(K)$. In view of Lemma \ref{lengthext}, it is enough to obtain that the $A_\PP$-module $K\otimes A_\PP$ has finite length. Indeed, $K$ can be written as a composite extension of modules $A/\QQ_i$, where $\QQ_i$ are prime ideals of $A$. Then all $\QQ_i$ have coheight $\le\alpha$. Therefore either $\QQ_i=\PP$ or $\QQ_i$ is not contained in $\PP$. In the latter case, we have $A/\QQ_i\otimes A_\PP=0$, while $A/\PP\otimes A_\PP$ is the residual field of $A_\PP$ and therefore has length one. By flatness of $A_\PP$, we can thus write $K\otimes A_\PP$ as a composite extension of modules of length $\le 1$, so that $K\otimes A_\PP$ has finite length. 
\end{proof}

Let $M$ be a noetherian $A$-module. Define the ordinal-valued length function of $M$ as
$$\ell(M)=\sum_\alpha\omega^\alpha\cdot\ell_\alpha(M),$$
when the sum ranges over the ordinals $\alpha$ in \textit{reverse order}. 

Note that if $M$ has finite Krull dimension (as most usual noetherian modules) then the exponents in the above ``polynomial" are finite, i.e. $\ell(M)<\omega^\omega$.

The following proposition gives a characterization of the ordinal-valued length function $\ell$.

\begin{prop}
Let $A$ be a ring and $M$ a noetherian $A$-module. Then
$$\ell(M)=\sup\{\ell(N)+1|N\textnormal{ proper quotient of }M\}.$$\label{caractl}
\end{prop}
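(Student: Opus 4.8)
The plan is to prove the two inequalities $\ell(M)\ge\sup\{\ell(N)+1\mid N\text{ proper quotient of }M\}$ and $\ell(M)\le\sup\{\ell(N)+1\mid N\text{ proper quotient of }M\}$ separately, using the additivity Lemma \ref{lengthexto} as the principal tool. First I would record the basic monotonicity fact: if $0\to K\to M\to N\to 0$ is exact with $K\ne 0$, and $\beta$ denotes the Krull dimension of $K$, then by Lemma \ref{lengthexto} we have $\ell_\beta(M)=\ell_\beta(N)+\ell_\beta(K)$ with $\ell_\beta(K)>0$ (since $K$ has an associated prime of coheight $\beta$, so $\ell_\beta(K)\ge 1$ by Lemma \ref{lengthass}), and $\ell_\alpha(M)=\ell_\alpha(N)$ for all $\alpha<\beta$ (again by Lemma \ref{lengthexto}, applied with the roles arranged so that the kernel has dimension $<\alpha$ — more precisely, for $\alpha>\beta$ the term $\ell_\alpha$ of a module of dimension $\le\beta$ vanishes, so $\ell_\alpha(M)=\ell_\alpha(N)$ there too). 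Comparing the ``polynomials'' $\ell(M)=\sum_\alpha\omega^\alpha\cdot\ell_\alpha(M)$ and $\ell(N)=\sum_\alpha\omega^\alpha\cdot\ell_\alpha(N)$ written in reverse order, they agree in all exponents $>\beta$, and at exponent $\beta$ the coefficient strictly drops; hence $\ell(N)<\ell(M)$. This already shows $\ell(M)\ge\ell(N)+1$ is not quite immediate (ordinal arithmetic: $\ell(N)<\ell(M)$ gives $\ell(N)+1\le\ell(M)$), so actually $\ell(M)\ge\sup\{\ell(N)+1\}$ follows once we know $\ell(N)<\ell(M)$ for every proper quotient — which is exactly the computation just sketched. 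So the $\ge$ direction reduces to: every proper quotient has strictly smaller length.

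For the reverse inequality $\ell(M)\le\sup\{\ell(N)+1\}$ I would argue by induction on $\ell(M)$ (or on the pair $(\dim M,\ell(M))$), exhibiting, for each ordinal $\gamma<\ell(M)$, a proper quotient $N$ with $\ell(N)\ge\gamma$. Write $\ell(M)=\omega^{\alpha_0}\cdot c_0+\omega^{\alpha_1}\cdot c_1+\cdots$ in reverse order with $\alpha_0=\dim M$ the top exponent and $c_0=\ell_{\alpha_0}(M)\ge 1$. The idea is to peel off a small piece: choose an associated prime $\PP$ of coheight $\alpha_0$, and inside $M$ find a submodule $K$ isomorphic to $A/\PP$ (or more flexibly, built from the $\PP$-primary part). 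Using $K=A/\PP$ we get $\dim K=\alpha_0$ and $\ell_{\alpha_0}(K)=\ell_\PP(K)$; one checks $\ell_\PP(A/\PP)=1$, so $\ell(M/K)$ has top coefficient $c_0-1$ if $c_0\ge 2$, i.e. $\ell(M/K)=\ell(M)-\omega^{\alpha_0}$ in the obvious sense, and iterating $c_0-1$ times lands us at a quotient whose top coefficient is $1$. The genuinely delicate case is when $c_0=1$: then to decrease $\ell(M)$ continuously we must instead produce quotients realizing ordinals arbitrarily close to (but below) $\omega^{\alpha_0}$, which forces us to kill the top-dimensional part gradually. Here I would pass to a submodule $K\subset M$ with $\dim K=\alpha_0$ chosen so that $M/K$ has dimension $<\alpha_0$ yet $\ell(K)$ is still close to $\ell(M)$ — e.g. take $K$ to be the sum of the $\PP$-primary submodules for all $\PP$ of coheight $\alpha_0$, so $\dim(M/K)<\alpha_0$. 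Then further quotients of $M$ that contain a large chunk of $K$ but have been truncated below $\alpha_0$ furnish the needed ordinals; making this precise requires realizing every ordinal $<\omega^{\alpha_0}$ as $\ell$ of some finitely-generated module obtained as a quotient, which one does by a secondary induction on $\alpha_0$ (the module $A/\QQ$ with $\QQ$ of coheight $\alpha_0-1$, or a chain of such, has length $\omega^{\alpha_0-1}\cdot k$, and finite-length quotients interpolate the rest).

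Concretely, the induction I would set up is: \emph{for every noetherian $A$-module $M$ and every ordinal $\gamma<\ell(M)$, there is a proper quotient $N$ of $M$ with $\ell(N)\ge\gamma$}; combined with the first direction this gives the proposition. Base case: $\ell(M)$ finite means $M$ has finite length, and then quotienting by a simple submodule drops the length by exactly $1$, so all smaller values are realized. Inductive step: given $\gamma<\ell(M)$, compare the reverse-order normal forms of $\gamma$ and $\ell(M)$; they first differ at some exponent $\beta\le\dim M$. Pick a submodule $K$ of $M$ with $\dim K=\beta$ and with $\ell_\beta(K)$ as large as possible subject to $\dim(M/K)\le\beta$ — such a $K$ exists because $\ell_\beta(M)<\infty$, and one can take $K$ to be generated by finitely many elements of the relevant associated-prime submodules. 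Using Lemma \ref{lengthexto} repeatedly, controlled quotients $M\twoheadrightarrow M/K'$ with $K'\subseteq K$ realize every value between $\ell(M/K)$ and just below $\ell(M)$ in the exponent-$\beta$ coefficient, and for the coefficients below $\beta$ we recurse on $M/K$ (whose length is strictly smaller, but whose relevant lower-exponent data matches that of $M$ up to the discrepancy we are tracking). The main obstacle, and where I would spend the most care, is precisely this interpolation at a fixed exponent $\beta$ when the coefficient there is $1$: one must show that the ordinals $\ell(N)$ for $N$ ranging over proper quotients of $M$ form a set with supremum exactly $\ell(M)$ and not something strictly smaller like $\ell(M)$ with its top term shaved — this is the content of the claim that $\omega^\beta$ is a \emph{limit} of lengths of actual finitely generated modules, which needs the lower-dimensional induction to supply modules of length $\omega^{\beta-1}\cdot k$ for all $k$ and then finite-length padding; the successor-vs-limit bookkeeping in ordinal arithmetic (e.g. that $\omega^{\alpha_0}\cdot c_0$ with $c_0$ a successor has an immediate predecessor in the relevant sense only after the $\omega^{\alpha_0}$ part, etc.) is the fiddly part but is routine once the module-theoretic inputs are in place.
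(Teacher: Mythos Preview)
Your argument for $\ell(M)\ge\sup\{\ell(N)+1\}$ is correct and matches the paper's (the slip ``$\alpha<\beta$'' should read $\alpha>\beta$, as your own parenthetical indicates). The reverse inequality, however, has a genuine gap: you peel from the \emph{top}, taking $K\simeq A/\PP$ with $\textnormal{coht}(\PP)=\alpha_0=\dim M$, and assert ``$\ell(M/K)=\ell(M)-\omega^{\alpha_0}$ in the obvious sense''. But Lemma~\ref{lengthexto} controls $\ell_\alpha$ of an extension only when the kernel has Krull dimension $\le\alpha$; with $\dim K=\alpha_0$ it says nothing about $\ell_\alpha(M/K)$ for $\alpha<\alpha_0$, and these lower coefficients need not match those of $M$ (e.g.\ $A=k[x]$, $M=A^2$, $K=(x)\oplus 0\simeq A$: then $\ell(M)=\omega\cdot 2$ but $\ell(M/K)=\ell(k\oplus A)=\omega+1$, not $\omega$). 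The same issue makes your inductive step incoherent: ``pick $K$ with $\dim K=\beta$ and $\dim(M/K)\le\beta$'' forces $\dim M\le\beta$, impossible once $\beta<\dim M$; and the parenthetical claim that the lower-exponent data of $M/K$ ``matches that of $M$'' is exactly what fails. The underlying point is that to approximate an ordinal from below one must preserve the high-order coefficients and vary the low-order ones---so the kernel should have \emph{small} dimension, not large.

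The paper does precisely this: let $\alpha$ be the \emph{least} exponent with $\ell_\alpha(M)\neq 0$ and take $K\simeq A/\PP$ with $\textnormal{coht}(\PP)=\alpha$. Then $\dim K=\alpha$, so Lemma~\ref{lengthexto} applies at every exponent $\ge\alpha$, and for every $V\subset K$ the coefficients of $M/V$ at exponents $>\alpha$ agree with those of $M$ while $\ell_\alpha(M/V)=\ell_\alpha(M)-1$. Writing $\ell(M)=P+\omega^\alpha$, one then interpolates inside $K$: if $\alpha$ is a limit, primes $\PP_\beta\supset\PP$ of each coheight $\beta<\alpha$ yield $V_\beta\subset K$ with $K/V_\beta\simeq A/\PP_\beta$ and $\ell(M/V_\beta)\ge P+\omega^\beta$; if $\alpha=\beta+1$, a prime $\QQ\supset\PP$ of coheight $\beta$ and submodules $V_n\subset K$ with $K/V_n\simeq A/(\QQ^n+\PP)$, together with Lemma~\ref{lengthinfinity}---the key input you never invoke---give $\ell_\beta(K/V_n)\to\infty$ and hence $\sup_n\ell(M/V_n)\ge P+\omega^\alpha=\ell(M)$.
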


In other words, $\ell$ coincides with the ``Krull ordinal" of the Noetherian ordered set of submodules of $M$, introduced in \cite{Gull}, as well as (with a slight variant) in \cite{Bass}. This inductive definition makes sense more generally for any noetherian module over any ring (commutative or not); it can be viewed as a quantitative gauge of noetherianity.

\begin{proof}
Suppose that we have an exact sequence
$$0\to K\to M\to N\to 0,$$
with $K\neq 0$. Let $\alpha$ be the Krull dimension of $K$, and pick $\beta\ge\alpha$. Then, by Lemma \ref{lengthexto}, $\ell_\beta(M)=\ell_\beta(N)+\ell_\beta(K)$. In particular, if $\beta>\alpha$, then $\ell_\beta(M)=\ell_\beta(N)$, and $\ell_\alpha(M)>\ell_\alpha(N)$. Therefore $\ell(M)>\ell(N)$.

Now let us prove the other inequality, namely
$$\ell(M)\le\sup\{\ell(N)+1|N\textnormal{ proper quotient of }M\}.$$

\begin{itemize}
\item Zeroth case: $M=0$. Then we just get $0=\sup\emptyset$.

\item First case: $\ell(M)$ is a successor ordinal. This occurs if and only if $\ell_\PP(M)>0$ for some \textit{maximal} ideal $\PP$, in which case $\PP\in\Ass_A(M)$ by Lemma \ref{lengthass}, i.e. $M$ has a submodule $K$ isomorphic to $A/\PP$. It is then straightforward that 
$\ell(M)=\ell(M/K)+1$.

\item Second case: the least $\alpha$ such that $\ell_\alpha(M)\neq 0$ is a limit ordinal. Pick $\PP\in\Ass_A(M)$ with $\text{coht}(\PP)=\alpha$. Find an exact sequence $0\to K\to M\to N\to 0$ with $K\simeq A/\PP$. For every $\beta<\alpha$, there exists a prime ideal $\PP_\beta$ containing $\PP$ with $\text{coht}(\PP_\beta)=\beta$. Find a submodule $V_\beta$ of $K$ such that $K/V_\beta$ is isomorphic to $A/\PP_\beta$. From the exact sequences $0\to K\to M\to N\to 0$, $0\to K/V_\beta\to M/V_\beta\to N\to 0$ and Lemma \ref{lengthexto}, we get:
$$\ell_\gamma(M/V_\beta)=\ell_\gamma(M)\text{ if }\gamma>\alpha;$$
$$\ell_\alpha(M/V_\beta)=\ell_\alpha(M)-1;$$
$$\ell_\beta(M/V_\beta)\ge 1.$$
Now write $\ell(M)=P+\omega^\alpha$, where
$$P=\sum_{\gamma>\alpha}\omega^\gamma\cdot\ell_\gamma(M)+\omega^\alpha\cdot(\ell_\alpha(M)-1).$$
Then we get
$$\ell(M/V_\beta)\ge P+\omega^\beta,$$and thus
$$\sup_{\beta<\alpha}\ell(M/V_\beta)\ge P+\sup_{\beta<\alpha}\omega^\beta=P+\omega^\alpha=\ell(M).$$
\item Third case: the least $\alpha$ such that $\ell_\alpha\neq 0$ is a successor ordinal $\alpha=\beta+1$. Pick $\PP\in\Ass_A(M)$ with $\text{coht}(\PP)=\alpha$ and choose a prime ideal $\QQ$ of coheight $\beta$ containing $\PP$. Find an exact sequence $0\to K\to M\to N\to 0$ with $K\simeq A/\PP$. For every $n$, there exists a submodule $V_n$ of $K$ such that $K/V_n$ is isomorphic to $A/(\QQ^n+\PP)$. By Lemma
\ref{lengthinfinity}, $(\ell_\beta(K/V_n))$ is unbounded when $n\to\infty$. From the exact sequences $0\to K\to M\to N\to 0$, $0\to K/V_n\to M/V_n\to N\to 0$ and Lemma \ref{lengthexto}, we get:
$$\ell_\gamma(M/V_n)=\ell_\gamma(M)\text{ if }\gamma>\alpha;$$
$$\ell_\alpha(M/V_n)=\ell_\alpha(M)-1;$$
$$\ell_\beta(M/V_n)\to\infty\text{ when }n\to\infty,$$
and therefore $\sup_n\ell(M/V_n)\ge\ell(M)$.
\end{itemize}
\end{proof}


\section{Reduced length}\label{reducedo}

Define, for every ordinal $\alpha$, the ordinal $\alpha'$ as $\alpha'=\alpha+1$ if $\alpha<\omega$ and $\alpha'=\alpha$ otherwise.
If $M$ is a noetherian $A$-module, define its reduced length as follows

$$\ell'(M)=\sum_\alpha\omega^\alpha\cdot\ell_{\alpha'}(M),$$
where as usual the sum ranges over ordinal in reverse order. Observe that the reduced length is characterized by the length, as a consequence of the formula
$$\ell(M)=\omega\cdot\ell'(M)+\ell_0(M).$$

\begin{prop}
If $M$ is any noetherian $A$-module, then $\ell'(M)=\sup_N(\ell'(N)+1)$, where $N$ ranges over all quotients of $M$ with non-artinian kernel. Moreover, $\ell'(N)=\ell'(M)$ if $N$ is a quotient of $M$ with artinian kernel.\label{sigmaprime}
\end{prop}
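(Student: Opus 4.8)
The plan is to deduce both statements from the formula $\ell(M)=\omega\cdot\ell'(M)+\ell_0(M)$ together with Proposition~\ref{caractl}, using the elementary observation that a submodule $K$ of a noetherian module is artinian precisely when it has finite length, i.e.\ Krull dimension $\le 0$, which in our additive notation means $\ell_\alpha(K)=0$ for all $\alpha\ge 1$ and hence (by Lemma~\ref{lengthexto}) $\ell'(M)=\ell'(N)$ whenever $0\to K\to M\to N\to 0$ with $K$ artinian. That already gives the ``moreover'' clause, and it also shows that in the supremum over all quotients $N$ only those with \emph{non}-artinian kernel can have $\ell'(N)<\ell'(M)$, so the inequality $\ell'(M)\ge\sup_N(\ell'(N)+1)$ is the one requiring an argument, and the reverse inequality $\ell'(M)\le\sup_N(\ell'(N)+1)$ is the one that needs a witnessing quotient to be produced.

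For the inequality $\ell'(N)<\ell'(M)$ when $0\to K\to M\to N\to 0$ has $K$ non-artinian: here $K$ has Krull dimension $\alpha\ge 1$, so by Lemma~\ref{lengthexto} applied at $\beta\ge\alpha$ we get $\ell_\beta(M)=\ell_\beta(N)$ for $\beta>\alpha$ and $\ell_\alpha(M)>\ell_\alpha(N)$; since $\alpha\ge 1$ we have $\alpha=(\alpha-1)'$ or, if $\alpha\ge\omega$, $\alpha=\alpha'$, and in either case the coefficient $\ell_\alpha$ appears as the coefficient of $\omega^{\alpha-1}$ (resp.\ $\omega^\alpha$) in the reduced-length ``polynomial''; comparing the expansions $\ell'(M)=\sum_\gamma\omega^\gamma\ell_{\gamma'}(M)$ and $\ell'(N)=\sum_\gamma\omega^\gamma\ell_{\gamma'}(N)$ term by term in reverse order shows the first discrepancy is a strict drop, whence $\ell'(M)>\ell'(N)$. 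I expect to phrase this cleanly by just invoking the already-proved strict monotonicity $\ell(M)>\ell(N)$ from Proposition~\ref{caractl} and then peeling off the identity $\ell=\omega\cdot\ell'+\ell_0$: if $\ell'(M)=\ell'(N)$ then $\ell(M)-\ell(N)$ would be the difference of the $\ell_0$'s, which is finite, forcing $K$ to have Krull dimension $0$, i.e.\ $K$ artinian, a contradiction.

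For the reverse inequality I would mimic the proof of Proposition~\ref{caractl}, but harvesting quotients with non-artinian kernel only. If $\ell'(M)=0$ there is nothing to prove (the relevant supremum is over possibly empty set and $\ell'(M)=0$). Otherwise let $\alpha\ge 1$ be least with $\ell_{\alpha'}(M)\ne 0$ contributing, equivalently $\omega\cdot\ell'(M)$ has a nonzero term of degree $\ge 1$; pick $\PP\in\Ass_A(M)$ of appropriate coheight (either $\alpha$ itself if it is the relevant one, or slightly larger as in the limit/successor bookkeeping of Proposition~\ref{caractl}) and a chain of primes above it, producing submodules $V$ of $K\simeq A/\PP$ with $K/V\simeq A/\QQ$ or $A/(\QQ^n+\PP)$ whose kernels $V$ are \emph{not} artinian (this is automatic since $A/\PP$ has dimension $\ge 1$ and $V$ is a nonzero submodule of it of the same dimension, as $A/\PP$ is a domain). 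The three-case analysis (zero, successor $\ell'$, limit $\ell'$) then runs exactly as in Proposition~\ref{caractl} with $\ell$ replaced by $\ell'$ and exponents shifted down by one; the key point, and the main obstacle, is checking that the intermediate quotients $M/V$ all arise with non-artinian kernel $V$, so that they are legal competitors in the supremum — but since $V$ is always a nonzero submodule of the one-dimensional-or-more domain $A/\PP$ it has positive Krull dimension, hence is non-artinian, which closes the argument. Finally, one records that the successor case of $\ell'$ corresponds to $\ell_\PP(M)>0$ for a prime $\PP$ of coheight exactly $1$, where the submodule $A/\PP\cong K$ is non-artinian and $\ell'(M/K)+1=\ell'(M)$.
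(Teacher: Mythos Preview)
Your proposal is correct and follows essentially the same approach as the paper: the ``moreover'' clause and the strict inequality $\ell'(N)<\ell'(M)$ for non-artinian kernel both come from Lemma~\ref{lengthexto} (exactly as the paper does), and the reverse inequality is obtained by rerunning the case analysis of Proposition~\ref{caractl} with the indices shifted. Your explicit verification that the auxiliary kernels $V_\beta$, $V_n$ are non-artinian (as nonzero ideals in the domain $A/\PP$ of coheight $\ge 1$) is a detail the paper leaves implicit in its ``go on exactly like in the case of non-reduced length''; note only that what you call the ``limit $\ell'$'' case still bifurcates into the two subcases (least $\alpha\ge 1$ limit, or successor $\ge 2$) just as in Proposition~\ref{caractl}.
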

\begin{proof}
The proof is similar to that of Proposition \ref{caractl}, so let us just sketch it, stressing on the differences.

First we have to prove that $\ell(N)<\ell(M)$ for every quotient $N$ of $M$ with non-artinian kernel $K$. The proof is the same, just noticing that then the Krull dimension of $K$ is at least one.

It remains to prove the reverse inequality
$$\ell'(M)\ge\sup\{\ell'(N)+1|N\textnormal{ quotient of }M\textnormal{ with non-artinian kernel}\}.$$

\begin{itemize}
\item Zeroeth case: $M$ is artinian. Then we just get $0=\sup\emptyset$.

\item First case: $\ell_1(M)\neq 0$. Then $M$ has a submodule $K$ isomorphic to $A/\PP$ for some prime ideal $\PP$ of coheight one. Then $\ell'(M)=\ell'(M/K)+1$.

\item Second case (respectively third case): the least $\alpha\ge 1$ such that $\ell_\alpha(M)\neq 0$ is a limit ordinal (resp. is a successor ordinal $\ge 2$). Go on exactly like in the case of non-reduced length.
\end{itemize}

Finally, if $N$ is a quotient of $M$ with artinian kernel, it follows from Lemma \ref{lengthexto} that $\ell_i(N)=\ell_i(M)$ for all $i\ge 1$, and therefore $\ell'(N)=\ell'(M)$.
\end{proof}


\section{Cantor-Bendixson rank}\label{sec:cb}

Let $M$ be a module over a ring $A$.
Let $\Sub_A(M)$ be the set of
submodules of $M$. This is a closed subset of $2^M$, endowed with the product topology which makes it a Hausdorff compact space; if $M$ has countable cardinality it is moreover metrizable. Let $\Quo_A(M)$ the set of quotients of $M$. It can be defined as a topological space that coincides with $\Sub_A(M)$, in which we view its elements as quotients of $M$ through the correspondence $K\leftrightarrow M/K$. In particular, $M\in\Quo_A(M)$ corresponds to $\{0\}\in\Sub_A(M)$. 

If $N$ is a quotient of $M$, then there is a natural embedding of $\Quo_A(N)$ into $\Quo_A(M)$. It is continuous so has closed image; moreover its image is open if and only if the $\text{Ker}(M\to N)$ is finitely generated (the easy argument is given in a similar context in \cite[Lemma~1.3]{CGP}); this is fulfilled if $M$ is Noetherian. 

In any topological space $X$ define by transfinite induction $I_0(X)$ as the set of isolated points of $X$, and $I_\alpha(X)=I_0(X-\bigcup_{\beta<\alpha}I_\beta(X))$. For $x\in X$, set
$\rk(x,X)=\infty$ if $x\notin\bigcup I_\alpha(X)$ and $\rk(x,X)=\alpha$ if $x\in I_\alpha$.
As all $I_\alpha(X)$ are pairwise disjoint, this is well-defined. Agree that $\infty>\alpha$ for every ordinal $\alpha$.

Now for every $A$-module $M$, set $\rk(M)=\rk(M,\Quo_A(M))$. Observe that if $N$ is a quotient of $M$, then $\rk(N)=\rk(N,\Quo_A(M))$, as the natural embedding $\Quo_A(N)\to\Quo_A(M)$ is open (because the kernel of $M\to N$ is finitely generated by noetherianity). In particular, if $N$ is generated by $k$ elements, then it can be viewed as a quotient of $A^k$, the free module of rank $k$, and $\rk(N)$ coincides with the Cantor-Bendixson rank of $N$ inside the space $\Quo_A(A^k)$ of finitely generated $A$-modules over $k$ marked generators.

\begin{lem}
Every noetherian $A$-module $M$ satisfying $\rk(M)=0$ (i.e. $M$ is isolated) has finite length. More precisely, every noetherian $A$-module of infinite length contains a decreasing sequence of non-zero submodules $(N_n)$ with trivial intersection.\label{rkzero}
\end{lem}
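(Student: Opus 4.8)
The plan is to prove the stronger, more concrete assertion, from which the statement about $\rk(M)=0$ follows immediately: if $M$ is a noetherian $A$-module of infinite length, then $M$ is not an isolated point of $\Quo_A(M)$, because we can exhibit a strictly decreasing sequence of non-zero submodules $(N_n)$ with $\bigcap_n N_n = 0$; the quotients $M/N_n$ are then distinct from $M$ but converge to $M$ in $\Quo_A(M)$, so $M$ is an accumulation point. So everything reduces to the submodule-theoretic claim.

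To build such a sequence, first I would reduce to the case where $M$ itself is one of the basic ``prime'' pieces. Since $M$ is noetherian, it admits a finite filtration $0 = M_0 \subset M_1 \subset \dots \subset M_r = M$ with each successive quotient $M_i/M_{i-1} \cong A/\PP_i$ for prime ideals $\PP_i$. If $M$ has infinite length, then at least one factor $A/\PP_i$ has infinite length, i.e. at least one $\PP_i$ is \emph{not} a maximal ideal. Fix such an index; it suffices to produce a strictly decreasing sequence of non-zero submodules of $A/\PP_i$ with trivial intersection, and then pull it back to a chain inside $M_i$ (hence inside $M$), noting that the preimages in $M_i$ of a strictly decreasing chain in $A/\PP_i = M_i/M_{i-1}$ with trivial intersection is a chain of submodules of $M$ whose intersection is $M_{i-1}$ --- which is not quite $0$. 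To fix this, one instead works with the chain $N_n \subseteq M_i$ whose images in $A/\PP_i$ are the $J_n$ below; these have intersection contained in $M_{i-1}$, which has finite length, and then one extends the sequence further down inside $M_{i-1}$ using a composition series of $M_{i-1}$ to reach $0$. Concatenating two strictly decreasing sequences with the right intersection behaviour gives the desired sequence in $M$.

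The core case is therefore: $A/\PP$ with $\PP$ a non-maximal prime, and we want a strictly decreasing chain of non-zero ideals of $A/\PP$ with trivial intersection. Replacing $A$ by $A/\PP$ we may assume $A$ is a noetherian domain of Krull dimension $\ge 1$. Pick any nonzero non-unit $a \in A$ (possible since $\dim A \ge 1$ means $A$ is not a field); then the chain $(a^n)$ of principal ideals is strictly decreasing because $A$ is a domain (if $a^n = a^{n+1} b$ then $a^n(1-ab) = 0$ forces $ab = 1$, contradicting that $a$ is a non-unit), and $\bigcap_n (a^n) = 0$ by the Krull intersection theorem applied to the ideal $(a)$ in the noetherian domain $A$ (here one also uses that $(a)$ is proper). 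This settles the prime case.

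The main obstacle is the bookkeeping in passing from the prime piece $A/\PP_i$ back to $M$: one must be careful that the pulled-back chain genuinely has trivial intersection rather than intersection $M_{i-1}$, which is why the argument splices together two decreasing sequences --- one coming from $A/\PP_i$ (infinite, with intersection landing in the finite-length part $M_{i-1}$) and one coming from a composition series of $M_{i-1}$ (finite, reaching $0$). Everything else is standard: the filtration of noetherian modules by primes, the Krull intersection theorem, and the elementary fact that $(a^n)$ is strictly decreasing in a domain. I would also remark explicitly that the sequence of quotients $M/N_n$ then converges to $M$ and each differs from $M$, which is the topological content needed for $\rk(M) \ge 1$, completing the proof of the lemma.
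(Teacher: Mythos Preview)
Your core case—the noetherian domain that is not a field—is handled correctly, and your use of $(a^n)$ with Krull intersection is essentially the paper's argument (the paper uses $\mathcal{M}^n$ for a maximal ideal $\mathcal{M}$, which is the same mechanism).

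The gap is in transferring the chain from $A/\PP_i$ back to $M$. First, a minor point: you assert that $M_{i-1}$ has finite length, but you only fixed ``such an index'' $i$, not the minimal one; without minimality there is no reason the earlier factors $A/\PP_j$ ($j<i$) are all simple. Second, and this is the real problem, the ``concatenation'' cannot work. The preimages $N_n\subseteq M_i$ form an \emph{infinite} strictly decreasing sequence with $\bigcap_n N_n = M_{i-1}$, while a composition series of $M_{i-1}$ is a \emph{finite} chain down to $0$. There is no way to append a finite tail after an infinite sequence and obtain a sequence indexed by the natural numbers; the intersection of the $N_n$ remains $M_{i-1}$ regardless. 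Consequently $N_n\to M_{i-1}$ in $\Sub_A(M)$, so $M/N_n\to M/M_{i-1}$ in $\Quo_A(M)$, not to $M$, and you have not exhibited $M$ as an accumulation point.

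The fix is to bypass the filtration. Since $M$ has infinite length it has a non-maximal \emph{associated} prime $\PP$, and then $A/\PP$ embeds as a submodule of $M$. Your chain $(a^n)$ inside this embedded copy of $A/\PP$ is already a decreasing sequence of non-zero submodules of $M$ with trivial intersection—no pullback through an intermediate $M_{i-1}$ and no splicing are needed. This is precisely the paper's route: it reduces to a submodule isomorphic to $A/\PP$ (noting that non-isolation passes up from submodules), and then runs the domain argument there.
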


\begin{rem}
The converse is not true in general: for instance if $A$ is an infinite field and $M$ is a 2-dimensional vector space. However every module of finite cardinality is obviously isolated.
\end{rem}

\begin{proof}[Proof of Lemma \ref{rkzero}]
Otherwise, $M$ has a non-maximal associated ideal $\PP$. As it is clear that being isolated is inherited by submodules, we can suppose that $M=A/\PP$. We can even suppose that $\PP=\{0\}$, so that $A=M$ is a noetherian domain which is not a field. Then if $\mathcal{M}$ is a maximal ideal in $A$, then $\{0\}\neq\mathcal{M}^n\to \{0\}$ in $\Sub_A(A)$ and we get a contradiction.
\end{proof}

\begin{thm}
Let $M$ be a noetherian $A$-module. Suppose that every artinian subquotient of $M$ has finite cardinality. Then $\rk(M)=\ell'(M)$.\label{cb}
\end{thm}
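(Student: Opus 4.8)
The plan is to prove the two inequalities $\rk(M)\le\ell'(M)$ and $\rk(M)\ge\ell'(M)$ separately, each by transfinite induction on $\ell'(M)$, using the recursive characterization of $\ell'$ from Proposition \ref{sigmaprime} as the engine. The key structural fact I would isolate first is that, by noetherianity, the natural embeddings $\Quo_A(N)\hookrightarrow\Quo_A(M)$ are open whenever $N$ is a quotient of $M$, so that $\rk$ behaves like a genuine Cantor--Bendixson rank on the directed system of all finitely generated quotients; in particular $\rk$ is monotone under passing to quotients, and a point $M$ has $\rk(M)\ge\alpha+1$ in $\Quo_A(M)$ precisely when $M$ is a limit (in the topology of $\Sub_A(M)$, i.e.\ the kernels converge to $0$) of quotients $N$ with $\rk(N)\ge\alpha$.

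\textbf{Upper bound.} For $\rk(M)\le\ell'(M)$ I would argue by transfinite induction on $\ell'(M)$. The base case is $\ell'(M)=0$, i.e.\ $M$ artinian, which by hypothesis means $M$ is finite, hence isolated, so $\rk(M)=0$. For the inductive step, suppose $\rk(M)>\ell'(M)$; then there is a sequence of quotients $N_i$ with kernels $K_i\to 0$ in $\Sub_A(M)$ and $\rk(N_i)\ge\ell'(M)$ for infinitely many $i$. Since $K_i\to 0$ and $K_i\neq 0$ eventually (otherwise $M$ itself would be isolated at the relevant stage, contradicting $\rk(M)>\ell'(M)\ge 0$ unless $M$ is finite), and since an artinian — hence finite — nonzero submodule is an isolated point of $\Sub_A(M)$, the $K_i$ must be non-artinian for large $i$; but then $\ell'(N_i)<\ell'(M)$ by the first half of Proposition \ref{sigmaprime}, so by induction $\rk(N_i)=\ell'(N_i)<\ell'(M)$, a contradiction. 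Here one must be a little careful to run the induction on $\rk$ and $\ell'$ in tandem rather than circularly; the clean way is to prove by induction on the ordinal $\alpha$ the statement ``$\ell'(M)<\alpha\implies\rk(M)<\alpha$'', using that $\rk(M)\ge\alpha$ forces $M$ to be approximated by quotients of rank $\ge\beta$ for all $\beta<\alpha$.

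\textbf{Lower bound.} For $\rk(M)\ge\ell'(M)$, again induct on $\ell'(M)$, the case $\ell'(M)=0$ being trivial. Write $\alpha=\ell'(M)$ and suppose first $\alpha$ is a successor; by Proposition \ref{sigmaprime} there is a quotient $N$ with non-artinian kernel and $\ell'(N)+1=\alpha$ — concretely (following the ``first case'' and ``third case'' analyses in the proof of Proposition \ref{sigmaprime}) one can even produce a \emph{sequence} of quotients $N_n$ of $M$, with kernels $K_n$ properly decreasing to a limit kernel $K$ with $M/K=N$, such that $\ell'(N_n)\ge\ell'(N)$; then $K_n\to K$ but we want kernels converging to $0$, so instead I would reduce to $N$ itself being approximated, i.e.\ apply the inductive hypothesis to get $\rk(N)\ge\ell'(N)=\alpha-1$ and then exhibit a sequence of quotients $N_n\to N$ in $\Quo_A(M)$ (kernels $\to 0$) with $\rk(N_n)\ge\alpha-1$; the relevant $N_n$ come from quotienting $M$ by the submodules $V_n$ with $K/V_n\cong A/(\QQ^n+\PP)$ appearing in the third case of the proof of Proposition \ref{sigmaprime}, whose reduced lengths are $\ge\ell'(M)-1$ by Lemma \ref{lengthexto} and Lemma \ref{lengthinfinity}, while $V_n\to K\supsetneq 0$ — so one actually first passes to the quotient $M/K$ is \emph{not} what is wanted; rather one observes $V_n$ decreases with $\bigcap V_n=0$ is false in general, so the correct device is to note that each $M/V_n$ has $\ell'\ge\alpha-1$, apply induction to get $\rk(M/V_n)\ge\alpha-1$, and check $V_n\to 0$ by choosing the $V_n$ inside a suitable submodule, e.g.\ replacing $K$ by $\bigcap K_j=0$ along a diagonal. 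For the limit case $\alpha=\sup\beta_i$, the second-case construction in Proposition \ref{sigmaprime} gives quotients $M/V_{\beta}$ with $\ell'(M/V_\beta)\ge\beta$ and $V_\beta\to 0$ along a cofinal sequence, whence $\rk(M/V_\beta)\ge\beta$ by induction and $\rk(M)\ge\sup\beta=\alpha$.

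\textbf{Main obstacle.} The delicate point, and the one I would spend the most care on, is the lower bound in the successor and limit cases: the constructions in the proof of Proposition \ref{sigmaprime} naturally produce submodules $V_n\subseteq K$ converging \emph{to $K$}, not to $0$, so they witness accumulation at $N=M/K$ rather than at $M$; one must instead produce a sequence of quotients of $M$ whose \emph{kernels tend to $0$} and whose reduced lengths stay $\ge\alpha-1$ (resp.\ are cofinal in $\alpha$). The fix is to choose a descending chain $K\supsetneq V_1\supsetneq V_2\supsetneq\cdots$ with $\bigcap_n V_n=0$ — possible since $K$ itself is non-artinian, hence by Lemma \ref{rkzero} contains a decreasing sequence of nonzero submodules with trivial intersection, which one intersects with the $V_n$ above — and to verify, via Lemma \ref{lengthexto}, that quotienting out such $V_n$ does not drop $\ell'$ below $\alpha-1$. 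Once this ``convergence to $0$'' bookkeeping is set up correctly, both inductions close, and combining the two inequalities yields $\rk(M)=\ell'(M)$, which contains Theorem \ref{main} (and Corollary \ref{cor:cbrkdom}) as the case $M=A$.
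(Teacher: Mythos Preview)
Your upper bound is essentially the paper's argument, and it is correct. The paper phrases it as a single noetherian induction (on proper quotients of $M$, valid by the ascending chain condition) proving both inequalities at once, but for this direction the content is the same: a converging sequence of proper quotients with $\rk\ge\ell'(M)$ must, by the inductive hypothesis and Proposition~\ref{sigmaprime}, have artinian (hence finite) kernels, and a sequence of nonzero submodules of a finite module cannot tend to $\{0\}$.

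Your lower bound, however, is substantially overcomplicated, and the complication hides a subtlety you do not resolve. You try to reproduce the explicit submodules $V_\beta$, $V_n$ from the \emph{proof} of Proposition~\ref{sigmaprime}, notice that they converge to $K$ rather than to $0$, and then patch this with Lemma~\ref{rkzero} plus intersections and an appeal to Lemma~\ref{lengthexto}. None of that is needed. The paper's argument is four lines and uses only the \emph{statement} of Proposition~\ref{sigmaprime}: assuming $\rk(M)<\ell'(M)$, pick $W$ non-artinian with $\ell'(M/W)\ge\rk(M)$; Lemma~\ref{rkzero} applied to $W$ gives nonzero $W_n\subset W$ with $W_n\to\{0\}$; monotonicity of $\ell'$ under quotients (an immediate consequence of Proposition~\ref{sigmaprime}) gives $\ell'(M/W_n)\ge\ell'(M/W)\ge\rk(M)$; by the inductive hypothesis $\rk(M/W_n)=\ell'(M/W_n)$, whence $\rk(M)\ge\rk(M/W_n)+1\ge\rk(M)+1$, a contradiction.

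Note also that your induction scheme (on the ordinal $\ell'(M)$, treating the two inequalities separately) creates a difficulty the paper avoids: to apply your lower-bound hypothesis to $M/W_n$ you would need $\ell'(M/W_n)<\ell'(M)$, i.e.\ $W_n$ non-artinian, which Lemma~\ref{rkzero} does not guarantee. This is fixable (only finitely many $W_n$ can lie in the finite maximal artinian submodule), but the paper's noetherian induction on proper quotients sidesteps the issue entirely, since every $M/W_n$ is a proper quotient regardless of whether $W_n$ is artinian.
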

\begin{proof}
Suppose that the statement is proved for every proper quotient of $M$.

Suppose that $\rk(M)>\ell'(M)$. Then there exists a sequence of proper quotients $M/W_n$, converging to $M$, such that $\rk(M/W_n)\ge\ell'(M)$. By induction, $\ell'(M/W_n)\ge\ell'(M)$. By Proposition \ref{sigmaprime}, this forces $W_n$ to be artinian, i.e. contained in the maximal artinian submodule $S$ of $M$. By assumption, $S$ is finite. As $W_n\to\{0\}$ in the space of submodules of $M$, this forces that eventually $W_n=0$, a contradiction.

Suppose that $\rk(M)<\ell'(M)$. Then, in view of Proposition \ref{sigmaprime}, there exists a quotient $M/W$, with $W$ non-artinian, such that $\ell'(M/W)\ge\rk(M)$. By induction we get $\rk(M/W)\ge\rk(M)$. As $W$ is non-artinian, it contains by Lemma \ref{rkzero} a properly decreasing sequence $(W_n)$ of non-zero submodules with trivial
intersection. Then $$\ell'(M/W_n)\ge\ell'(M/W)\ge\rk(M),$$ and $$\rk(M)\ge\sup(\rk(M/W_n)+1)$$ $$=\sup(\ell'(M/W_n)+1)\ge\rk(M)+1,$$
a contradiction.
\end{proof}

The following lemma is well-known, and implies that Theorem \ref{main} is a corollary of Theorem \ref{cb}.

\begin{lem}
Let $A$ be finitely generated ring. Then every finitely generated simple $A$-module has finite cardinality.
\end{lem}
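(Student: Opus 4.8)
The plan is to reduce the statement to the classical fact (a form of the Nullstellensatz, due to Zariski) that a field which is finitely generated as a commutative ring is finite. First observe that a simple $A$-module $M$ is automatically cyclic: choosing $0\neq m\in M$, simplicity forces $Am=M$, so $M\simeq A/\mathfrak{m}$ where $\mathfrak{m}=\Ann(m)$ is a maximal ideal of $A$. Since $A$ is a finitely generated $\mathbf{Z}$-algebra, so is the field $K:=A/\mathfrak{m}$; thus it suffices to prove that every field $K$ that is finitely generated as a ring is finite.

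Next I would show that $K$ has positive characteristic. If not, then $\mathbf{Z}\hookrightarrow K$, and since $K$ is a field it contains $\mathbf{Q}$; writing $K=\mathbf{Z}[a_1,\dots,a_n]$ we get $K=\mathbf{Q}[a_1,\dots,a_n]$, a finitely generated $\mathbf{Q}$-algebra which is a field, hence a number field by Zariski's lemma over $\mathbf{Q}$. But no number field $K$ is finitely generated as a $\mathbf{Z}$-algebra: given finitely many elements of $K$, a suitable nonzero integer $N$ clears all their denominators, so they all lie in $\mathcal{O}_K[1/N]$, and this ring is not a field, since $\mathcal{O}_K$ has maximal ideals not containing $N$. (Already the case $K=\mathbf{Q}$ gives a contradiction, $\mathbf{Z}[1/N]$ being no field for any $N$.) Therefore $\mathbf{F}_p\subseteq K$ for some prime $p$, and now $K=\mathbf{F}_p[a_1,\dots,a_n]$ is a finitely generated algebra over the field $\mathbf{F}_p$; by Zariski's lemma again, $K$ is a finite extension of $\mathbf{F}_p$, hence finite.

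Equivalently, one can package the last two steps by invoking that $\mathbf{Z}$ is a Jacobson ring: for any finitely generated algebra $A$ over a Jacobson ring $R$ and any maximal ideal $\mathfrak{m}$ of $A$, the contraction $\mathfrak{m}\cap R$ is maximal in $R$ and $A/\mathfrak{m}$ is a finite extension of $R/(\mathfrak{m}\cap R)$; taking $R=\mathbf{Z}$ makes $A/\mathfrak{m}$ a finite extension of some $\mathbf{F}_p$, as wanted. There is no genuine obstacle here beyond citing the Nullstellensatz over a field; the only point requiring a small independent argument is the exclusion of characteristic zero, which is the elementary localization remark above, and this is why the lemma is stated as well known.
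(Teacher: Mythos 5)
Your proof is correct and follows essentially the same route as the paper: reduce to a field $K=A/\mathfrak{m}$ that is finitely generated as a $\mathbf{Z}$-algebra, apply Zariski's lemma to see $K$ is finite over its prime subfield, and rule out characteristic zero by a denominators argument. The only cosmetic difference is in that last step: the paper exhibits $K$ as a strictly increasing union of proper subrings $\mathbf{Z}[1/n!k][X]/P(X)$, whereas you observe that a finite generating set would place $K$ inside $\mathcal{O}_K[1/N]$, which is not a field; both are the same idea, and your closing remark about $\mathbf{Z}$ being Jacobson is the standard modern packaging of the whole lemma.
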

\begin{proof}
Every such $A$-module $M$ can be viewed as a finitely generated $\mathbf{Z}$-algebra which is a field. Let $F$ be its prime subfield. Then by the Nullstellensatz, $M$ is a finite extension of $F$. If $F$ is a finite field we are done. If $F=\mathbf{Q}$, then $M=\mathbf{Q}[X]/P(X)$, where $P$ is a monic polynomial with coefficients in $\mathbf{Z}[1/k]$ for some integer $k>1$. Hence $M$ can be written as the increasing union of proper subrings
$\mathbf{Z}[1/n!k][X]/P(X)$, hence is not a finitely generated ring, a contradiction.
\end{proof}


\section{Comments on some other rings}\label{sec:other}

Theorem \ref{cb} fails when $A$ possesses an infinite simple $A$-module $M$ (i.e. $A$ has an infinite index maximal ideal). Indeed, we have $\ell'(M\times M)=0$, while $\rk(M\times M)=1$.

In all cases, we have:

\begin{prop}
$$\ell'(M)\le\rk(M)\le\ell(M).$$
\end{prop}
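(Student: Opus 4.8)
The plan is to prove the two inequalities $\ell'(M)\le\rk(M)$ and $\rk(M)\le\ell(M)$ separately, both by transfinite induction on the relevant ordinal, so that the argument parallels the two halves of the proof of Theorem \ref{cb} but without the finiteness hypothesis on artinian subquotients.

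For the lower bound $\ell'(M)\le\rk(M)$, I would argue by induction on $\ell'(M)$, assuming the inequality holds for all proper quotients of $M$ (equivalently, working with the fact that $\rk$ of a quotient is computed the same way inside $\Quo_A(M)$, by the openness remark). If $\ell'(M)=0$ there is nothing to prove. Otherwise, by Proposition \ref{sigmaprime} there is a quotient $M/W$ with $W$ non-artinian such that $\ell'(M/W)+1$ is as close as desired to $\ell'(M)$; it suffices to show $\rk(M)>\rk(M/W)$ whenever $W$ is non-artinian, since then $\rk(M)\ge\rk(M/W)+1\ge\ell'(M/W)+1$ by induction, and taking the sup gives $\rk(M)\ge\ell'(M)$ (using that $\ell'(M)$ is either a successor, handled directly, or a limit). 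To see $\rk(M)>\rk(M/W)$: by Lemma \ref{rkzero}, the non-artinian module $W$ contains a properly decreasing sequence $(W_n)$ of nonzero submodules with trivial intersection, so $M/W_n\to M$ in $\Quo_A(M)$ with $M/W_n\ne M$, and each $M/W$ is a further quotient of $M/W_n$; hence $M$ is not isolated in the subspace where we have removed points of rank $<\rk(M/W_n)$, forcing $\rk(M)\ge\sup_n(\rk(M/W_n)+1)\ge\rk(M/W)+1$. This is exactly the computation appearing at the end of the proof of Theorem \ref{cb}, and it uses none of the extra hypotheses.

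For the upper bound $\rk(M)\le\ell(M)$, I would again induct, this time on $\ell(M)$, and show that $M$ becomes isolated once all quotients of rank (equivalently, by induction, of length) $<\ell(M)$ are removed. Concretely: if $M/W$ is any proper quotient then $\ell(M/W)<\ell(M)$ by the first half of Proposition \ref{caractl}, so by induction $\rk(M/W)\le\ell(M/W)<\ell(M)$; thus every proper quotient lies in $\bigcup_{\beta<\ell(M)}I_\beta(\Quo_A(M))$. Removing all these points leaves $M$ as the only point of its neighborhood that survives — here one uses that a basic neighborhood of $M$ in $\Quo_A(M)$ consists of quotients $M/W$ with $W$ contained in a fixed finitely generated (hence, by noetherianity, arbitrary) submodule, so that any net of proper quotients converging to $M$ must eventually have $W=0$ only if... — and this is the delicate point. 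Actually the clean way is: $I_{\ell(M)}(\Quo_A(M))$ contains $M$ provided $M$ is isolated in $\Quo_A(M)\setminus\bigcup_{\beta<\ell(M)}I_\beta$, and since that complement is contained in $\{M\}$ (every proper quotient having been removed), $M$ is trivially isolated there, giving $\rk(M)\le\ell(M)$.

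The main obstacle I anticipate is the bookkeeping at limit stages in the lower-bound argument: when $\ell'(M)$ is a limit ordinal, Proposition \ref{sigmaprime} only gives quotients with $\ell'(M/W)+1$ approaching $\ell'(M)$, and I must ensure the construction of the decreasing sequences $(W_n)$ and the resulting rank inequality $\rk(M)\ge\sup(\rk(M/W_n)+1)$ genuinely produces the supremum over all $\beta<\ell'(M)$ and not merely over a cofinal-in-a-weaker-sense set; this is handled precisely as in the second and third cases of the proof of Proposition \ref{caractl}, combined with the above, so it is routine but requires care. Everything else is a direct transcription of arguments already in the excerpt, with the observation that the inequalities, unlike the equality in Theorem \ref{cb}, survive because at no point do we need an artinian subquotient to be finite — we only ever need non-artinian kernels to have infinitely many nonzero submodules shrinking to zero (Lemma \ref{rkzero}) and artinian (here possibly infinite) kernels to not decrease the length (Lemma \ref{lengthexto}, Proposition \ref{sigmaprime}).
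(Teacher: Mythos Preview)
Your approach is essentially the paper's: the upper bound is the ``straightforward induction'' the paper invokes, and the lower bound recycles the second half of the proof of Theorem~\ref{cb}. The upper bound argument is correct as you wrote it (and your worry about limit ordinals in the lower bound is unfounded: once $\rk(M)\ge\ell'(M/W)+1$ for every non-artinian $W$, taking the supremum over $W$ gives $\rk(M)\ge\ell'(M)$ regardless of whether $\ell'(M)$ is a limit).

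There is, however, a genuine gap in your restructuring of the lower bound. You try to prove the intermediate claim $\rk(M)>\rk(M/W)$ for $W$ non-artinian, and your chain ends with $\sup_n(\rk(M/W_n)+1)\ge\rk(M/W)+1$, justified only by ``$M/W$ is a further quotient of $M/W_n$''. But Cantor--Bendixson rank is \emph{not} known to be monotone under passage to quotients, so this step is unjustified. The paper's argument (phrased as a contradiction) avoids this: it never compares $\rk(M/W_n)$ to $\rk(M/W)$ directly, but instead routes through $\ell'$, which \emph{is} monotone under quotients by Proposition~\ref{sigmaprime}. Concretely, the working chain is
\[
\rk(M)\ \ge\ \sup_n\bigl(\rk(M/W_n)+1\bigr)\ \ge\ \sup_n\bigl(\ell'(M/W_n)+1\bigr)\ \ge\ \ell'(M/W)+1,
\]
using the inductive hypothesis $\rk(M/W_n)\ge\ell'(M/W_n)$ at the second step and $\ell'(M/W_n)\ge\ell'(M/W)$ at the third. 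This already gives $\rk(M)\ge\ell'(M/W)+1$, which is what you actually need; the claim $\rk(M)>\rk(M/W)$ is a detour you can simply drop.
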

Here we set $(\alpha+1)-1=\alpha$, and $\alpha-1=\alpha$ if $\alpha$ is not a successor ordinal.

The left-hand inequality has already been settled in the proof of Theorem \ref{cb}, where we did not make use of the assumption on maximal ideals. The right-hand inequality is obtained by a straightforward induction. It is not optimal: for instance if $\ell(M)$ is a successor ordinal, it is easy to check that $\rk(M)\le\ell(M)-1$. I do not know to which extent this can be improved. However, the following example provides a quite unexpected behaviour.

Let $A$ be a local principal domain, with maximal ideal $I$ of infinite index. Denote the cyclic indecomposable $A$-modules $M_n=A/I^n$.

Let $T\simeq {M_1}^{n_1}\oplus\dots\oplus{M_k}^{n_k}$ be an $A$-module of finite length $\ell(T)=\sum in_i$, where $n_k\neq 0$. Set $\ell^*(T)=\ell(T)-k$. (Agree that $\ell^*(0)=0$.)

\begin{lem}
Let $M$ be a finitely generated $A$-module, and $T$ its torsion submodule. Then, for $n\ge 0$,
\begin{itemize}
\item if $M\simeq A^{2n}\oplus T$, then $\rk(M)=\omega\cdot n+\ell^*(T)$;
\item if $M\simeq A^{2n+1}\oplus T$, then $\rk(M)=\omega\cdot n+\ell(T)+1$.
\end{itemize}
\end{lem}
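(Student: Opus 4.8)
The plan is to compute $\rk(M)$ for $M\simeq A^r\oplus T$ by transfinite induction on $\ell(M)$, treating the free rank $r$ and the torsion part $T$ separately, and using the general machinery already set up. The base case is $r=0$: here $M=T$ has finite length, so $\rk(M)$ is finite, and I would show directly that $\rk(T)=\ell^*(T)$ by analysing the quotient structure of $M_1^{n_1}\oplus\cdots\oplus M_k^{n_k}$. The key observation is that a quotient of $T$ with \emph{finitely many} submodules above it (so that it is approached only by finitely many quotients, hence cannot be a limit of distinct quotients of lower rank) is one for which no $M_i$ with $i\ge 2$ can be ``shortened'', because shortening $M_i\to M_{i-1}$ (for $i\ge 2$) can always be done through an infinite family of intermediate cyclic quotients $A/(I^{i-1}+ (\text{unit}+x)A)$-type pieces — more precisely, since $I$ has infinite index, $M_i$ has infinitely many submodules isomorphic to $M_1$ but only one equal to $I^{i-1}M_i$, and the passage to the quotient by a ``generic'' copy of $M_1$ inside an $M_i$-summand produces an isolated point at the appropriate level. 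Counting: each drop of the form $M_i\rightsquigarrow M_{i-1}$ for $i\ge 2$ costs exactly $1$ and there are $\ell(T)-k=\ell^*(T)$ such drops available before we reach a semisimple module $M_1^{(\text{something})}$, which is isolated (finite). So $\rk(T)=\ell^*(T)$.

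For the inductive step I would peel off one copy of $A$ at a time and show $\rk(A\oplus M')=\omega + \rk(M'')$ for a suitable $M''$, or more precisely establish the two recursions
\begin{align*}
\rk(A^{2n+1}\oplus T) &= \omega\cdot n + \ell(T) + 1,\\
\rk(A^{2n+2}\oplus T) &= \omega\cdot(n+1) + \ell^*(T).
\end{align*}
For the lower bounds I exhibit explicit approximating families of proper quotients. Given $N=A^{r}\oplus T$ with $r\ge 1$, pick a free generator $e$; the quotients $N/(I^m e)$ are all proper, converge to $N$ as $m\to\infty$ (since $I^m\to\{0\}$ in $\Sub_A(A)$, as $I$ is the maximal ideal of a local domain that is not a field), and $N/(I^m e)\simeq A^{r-1}\oplus (M_m\oplus T)$. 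By Proposition~\ref{sigmaprime}-style reasoning and the induction hypothesis applied to the smaller free rank, $\rk(N/(I^m e))$ grows without bound, yielding $\rk(N)\ge \sup_m(\rk(N/(I^m e))+1)$. Alternatively, quotienting by $(x e - e')$ for $x\in I^m$ and $e'$ another generator realizes a different stratum; the bookkeeping here is exactly where the parity split $A^{2n}$ vs $A^{2n+1}$ enters — one free generator can be traded for a ``$+1$'' (reaching $\ell(T)+1$) and the remaining even number $2n$ contributes $\omega\cdot n$ because two free generators together with the $+1$ reshuffle into $\omega$. I would make this precise by noting that $\ell'(A)=1$, $\ell'(A^2)=\omega$ in the notation of Section~\ref{reducedo} (the relevant $\ell_\alpha$ here: $A$ is a one-dimensional domain, so $\ell_1(A)=1$, hence $\ell'(A)=1$; and $\ell'(A^2)=2$? — no: caution, $\ell'$ is additive on $\ell_{\ge 1}$, so $\ell'(A^r\oplus T)= r\cdot 1 + (\text{contribution of }T\text{ in degree}\ge 1)$, which is finite, \emph{not} $\omega\cdot n$). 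This is the sign that $\rk$ genuinely differs from $\ell'$ here because the hypothesis of Theorem~\ref{cb} fails; the whole point of the lemma is that the rank overshoots $\ell'$ dramatically.

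For the upper bounds I argue that if $M/W$ is \emph{any} proper quotient, then $W$ corresponds, via the structure theorem over the PID $A$, to a choice of how to cut down $A^r\oplus T$; the crucial dichotomy is whether $W$ meets the free part (i.e. $M/W$ has free rank $<r$) or $W$ lies entirely in the torsion part (then $M/W\simeq A^r\oplus T'$ with $\ell(T')<\ell(T)$, or $T'$ differs from $T$ by killing a finite submodule, and $W$ can only be approached continuously through infinitely many quotients when the drop involves an $M_i$, $i\ge2$, being shortened). By induction each such $M/W$ has rank at most the asserted value minus one, so the sup is at most the asserted value. The subtle point, and the main obstacle, is handling the quotients $M/W$ where $W$ is a ``diagonal'' submodule — e.g. a copy of $A$ embedded skew into $A^2$, or a copy of $M_j$ glued across a free generator and a torsion generator — because these are the quotients that are \emph{not} limits of others and thus carry the isolated-point structure at each level; one must verify that such $W$ always produces a quotient isomorphic to one already accounted for (no new isomorphism types beyond $A^{r'}\oplus T'$), which follows from the classification of submodules of $A^r\oplus T$ over a PID but requires care about when two such $W$ give isomorphic quotients. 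I expect that getting the exact constant (the ``$+1$'' and the coefficient of $\omega$) right in every parity case — rather than just $\rk(M)\asymp \omega\cdot\lfloor r/2\rfloor$ — is where the real work lies, and it is cleanest to phrase it as: $\rk(A\oplus P)=\rk(P)+1$ when $\rk(P)<\omega$ (odd case absorbing torsion), while $\rk(A\oplus P)=\omega+\rk(P')$ once a free generator has already been ``spent'', with the transfinite induction on $\ell(M)$ closing the loop.
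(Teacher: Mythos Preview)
Your overall scaffold (induction on $\ell(M)$, separate lower and upper bounds) is the same as the paper's, and your lower bound for the free part via $M/(I^m e)\simeq A^{r-1}\oplus M_m\oplus T$ is exactly right. But there is a genuine error in your torsion analysis that propagates through the rest of the argument.

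You write that ``since $I$ has infinite index, $M_i$ has infinitely many submodules isomorphic to $M_1$ but only one equal to $I^{i-1}M_i$''. This is false: $A$ is a discrete valuation ring, so $M_i=A/I^i$ is \emph{uniserial}; its only submodules are $I^j/I^i$, and in particular it has a \emph{unique} copy of $M_1$. The source of the infinitude is not inside a single cyclic piece but in the socle of $T$ as a whole: when $\sum n_i\ge 2$, the socle is a vector space of dimension $\ge 2$ over the infinite field $A/I$, and it is the infinitely many lines in that vector space (with pairwise trivial intersection) that furnish the approximating family $M/D_j\to M$. Quotienting by a generic such line drops $\ell^*$ by exactly one (the top index $k$ is preserved as long as the line is not the socle of the last $M_k$ summand), which is what gives $\rk(T)=\ell^*(T)$. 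Your counting ``$\ell(T)-k$ drops of the form $M_i\rightsquigarrow M_{i-1}$'' happens to land on the right number, but for the wrong reason, and your mechanism does not actually produce the required convergent sequences.

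For the upper bound in the even case, the paper uses a trick you do not mention: fix one nonzero element $m$ in the socle of the top block $M_k^{n_k}$ as a ``discriminator''; the open neighbourhood $\{N:m\notin N\}$ of $\{0\}$ in $\Sub_A(M)$ forces every nearby $N\subset T$ to miss the socle of $M_k^{n_k}$, hence $T/N$ keeps the same top index $k$ and $\ell^*(T/N)<\ell^*(T)$. Without this, your dichotomy ``$W$ meets the free part or not'' does not by itself bound $\rk$ in the even case, because there are infinitely many $W\subset T$ and you must show they do not accumulate on $\{0\}$ with $\rk(M/W)$ too large. Finally, the ``peel off one $A$'' recursion you propose does not have a clean form (the increment in $\rk$ when adding one free summand depends on the parity \emph{and} on $k$), and the paper avoids this entirely: it simply posits the formula $r(M)$ and proves $\rk(M)=r(M)$ directly by induction on $\ell(M)$, treating the three cases (even rank with $\ell^*(T)>0$, even rank with $\ell^*(T)=0$, odd rank) in parallel.
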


Observe that on the other hand, $\ell(A^k\oplus T)=\omega\cdot k+\ell(T)$, and $\ell'(A^k\oplus T)=\ell(T)$.

\begin{proof}
Let us argue by induction on $\ell(M)$.

Suppose that $M\simeq A^{2n}\oplus T$ with $\ell^*(T)\neq 0$. Then $T\simeq {M_1}^{n_1}\oplus\dots\oplus{M_k}^{n_k}$, where $n_k\neq 0$ and $\sum n_i\ge 2$. Inside the socle of $T$, one can find infinitely many cyclic submodules $D_j$, with $D_j\cap D_m=\{0\}$ for $n\neq m$. Thus 
$(M/D_j)$ is a sequence of distinct modules tending to $M$. Now by induction $\rk(M/D_j)=\omega\cdot n+\ell^*(T/D_j)=\omega\cdot n+\ell^*(T)-1$ (except maybe for one single value of $j$, if $n_k=1$),
so $\rk(M)\ge\omega\cdot n+\ell^*(T)$.

Suppose now that $M\simeq A^{2n}\oplus T$ (with in mind $\ell^*(T)=0$ although we do not need it). If $n\ge 1$, then $M$ is a limit for $d\to\infty$ of modules isomorphic to $A^{2n-1}\oplus M_d\oplus T$, which by induction have $\rk\ge\omega\cdot (n-1)+d$. So $\rk(M)\ge\omega\cdot n$. This also obviously holds if $n=0$.

Suppose that $M\simeq A^{2n+1}\oplus T$. Then $M$ is a limit of modules isomorphic to $A^{2n}\oplus M_d\oplus T$, for $d\to\infty$, which by induction have $\rk\ge\omega\cdot n+\ell^*(M_d\oplus T)=\omega\cdot n+\ell(T)$ for $d$ large enough. Thus $\rk(M)\ge\omega\cdot n+\ell(T)+1$.

So we have established, in all cases, $\rk(M)\ge r(M)$, where $r(M)$ is the right-hand term given by the proposition. Let us now prove the upper bound $\rk(M)\le r(M)$, again by induction.

Suppose that $M=A^{2n}\oplus T$, and let $M/N$ be a quotient of $M$. If $N$ is not contained in $T$, then the rank drops, so that by induction $\rk(M/N)<r(M)$. Otherwise, define $k$ as in the beginning of the proof. Then, provided that $N$ does not contain the socle of ${M_k}^{n_k}$, we have $\ell^*(M/N)<\ell(M)$, so that $\rk(M/N)<r(M)$. 
We claim that at the neighbourhood of $\{0\}$ in $\Sub_A(M)$, no $N$ contains the socle of $M$. Indeed, fix a nonzero element $m$ in this socle (a ``discriminator"). The set of submodules $N$ in which $m\notin N$ is an open neighbourhood of $\{0\}$ in $\Sub_A(M)$ (which corresponds to a neighbourhood of $\{M\}$ in $\Quo_A(M)$); no such $N$ contains the socle. Thus we obtain $\rk(M)\le r(M)$.

Suppose that $M=A^{2n+1}\oplus T$, and let $M/N$ be a proper quotient of $M$. Then it is straightforward that $r(M/N)<r(M)$, and therefore we get $\rk(M)\le r(M)$. This concludes the proof.
\end{proof}


\end{document}